\def\opn#1#2{\def#1{\operatorname{#2}}} 
\opn\chara{char} \opn\length{\ell} \opn\pd{pd} \opn\rk{rk}
\opn\projdim{proj\,dim} \opn\injdim{inj\,dim} \opn\rank{rank}
\opn\depth{depth} \opn\codepth{codepth} \opn\grade{grade}
\opn\height{height} \opn\embdim{emb\,dim} \opn\codim{codim}
\opn\Tr{Tr} \opn\bigrank{big\,rank}
\opn\superheight{superheight}\opn\lcm{lcm}
\opn\trdeg{tr\,deg}%
\opn\reg{reg} \opn\lreg{lreg} \opn\skel{skel} \opn\Gr{Gr}
\opn\dim{dim} \opn\arithdeg{arithdeg}
\opn\div{div} \opn\Div{Div} \opn\cl{cl} \opn\Cl{Cl}
\opn\Spec{Spec} \opn\Supp{Supp} \opn\supp{supp} \opn\Sing{Sing}
\opn\Ass{Ass}
\opn\Ann{Ann} \opn\Rad{Rad} \opn\Soc{Soc}
\opn\Sym{Sym} \opn\Ker{Ker} \opn\Coker{Coker} \opn\Im{Im}
\opn\Hom{Hom} \opn\Tor{Tor} \opn\Ext{Ext} \opn\End{End}
\opn\Aut{Aut} \opn\id{id} \opn\ini{in} \opn\tr{tr}
\opn\nat{nat}\opn\it{it}
\opn\pff{proof}
\opn\Pf{proof} \opn\GL{GL} \opn\SL{SL} \opn\mod{mod} \opn\ord{ord}
\opn\aff{aff} \opn\con{conv} \opn\relint{relint} \opn\st{st}
\opn\lk{lk} \opn\cn{cn} \opn\core{core} \opn\vol{vol}
\opn\link{link} \opn\star{star} \opn\skel{skel}
\opn\gr{gr}
\def\pot#1#2{#1[\kern-0.28ex[#2]\kern-0.28ex]}
\opn\dirlim{\underrightarrow{\lim}}
\opn\inivlim{\underleftarrow{\lim}}
\def\Implies{\ifmmode\Longrightarrow \else
     \unskip${}\Longrightarrow{}$\ignorespaces\fi}
\def\implies{\ifmmode\Rightarrow \else
     \unskip${}\Rightarrow{}$\ignorespaces\fi}
\def\iff{\ifmmode\Longleftrightarrow \else
     \unskip${}\Longleftrightarrow{}$\ignorespaces\fi}
\newtheorem{thm}{Theorem}[section]
\newtheorem{cor}[thm]{Corollary}
\theoremstyle{definition}
\newtheorem{defn}[thm]{Definition}
\theoremstyle{remark}
\newtheorem{rem}[thm]{Remark}
\numberwithin{equation}{section}
\let\epsilon\varepsilon
\let\phi=\varphi
\let\kappa=\varkappa
\def\qed{\ifhmode\textqed\fi
   \ifmmode\ifinner\quad\qedsymbol\else\dispqed\fi\fi}
\def\textqed{\unskip\nobreak\penalty50
    \hskip2em\hbox{}\nobreak\hfil\qedsymbol
    \parfillskip=0pt \finalhyphendemerits=0}
\def\dispqed{\rlap{\qquad\qedsymbol}}
\opn\Gin{Gin}
\def\FF{{\mathcal F}}
\opn\inii{in} \opn\inim{inm} \opn\rate{rate}
\begin{document}

\title[Closed graphs are proper interval graphs]
 {Closed graphs are proper interval graphs}


\author[Marilena Crupi]{Marilena Crupi}
\address[Marilena Crupi]{Dipartimento di Matematica e Informatica,
Universit\`a di Messina\\ Viale Ferdinando Stagno d'Alcontres, 31\\ 98166 Messina, Italy.}
\email{mcrupi@unime.it}

\author[Giancarlo Rinaldo]{Giancarlo Rinaldo}
\address[Giancarlo Rinaldo]{Dipartimento di Matematica e Informatica\\
Universit\`a di Messina\\ Viale Ferdinando Stagno d'Alcontres, 31\\ 98166
Messina, Italy}
\email{giancarlo.rinaldo@tiscali.it}

\subjclass[2000]{Primary 13C05. Secondary 05C25}

\keywords{Closed graphs, interval graphs, Gr\"obner bases}



\begin{abstract}
In this note we prove that every closed graph $G$ is up to isomorphism a proper interval graph. As a consequence we obtain that there exist linear-time algorithms
for closed graph recognition.
\end{abstract}

\maketitle

\section*{Introduction}
In this paper a graph $G$ means a simple graph without isolated vertices, loops and multiple edges. Let $V(G)=[n] =\{1, \ldots, n\}$ denote the set of vertices and $E(G)$ the edge set of $G$.

Let $S = K[x_1,\cdots, x_n, y_1,\cdots, y_n]$ be the polynomial ring in $2n$ variables with coefficients in a field $K$. For $i < j$, set $f_{ij} = x_iy_j - x_jy_i$. The ideal $J_G$ of $S$ generated by the binomials $f_{ij} = x_iy_j - x_jy_i$ such that $i<j$ and $\{i,j\}$ is an edge of $G$, is called \textit{the binomial edge ideal} of $G$.  If  $\prec$ is a monomial order on $S$, then a graph $G$ on the vertex set $[n]$ is {\em closed} with respect to the given labeling of the vertices if the generators $f_{ij}$ of $J_G$ form a quadratic Gr\"obner basis (\cite{HH}, \cite{CR}). A combinatorial descrption of this fact is the following. A graph $G$ is closed with respect to the given labeling of the vertices if the following condition
is satisfied: \textit{for all edges $\{i,j\}$ and $\{k, \ell\}$ with $i<j$ and $k<\ell$ one has $\{j,\ell\}\in E(G)$ if $i=k$, and $\{i,k\}\in E(G)$ if $j=\ell$}.

In the last years different authors (\cite{HH}, \cite{HEH}, \cite{CR}, \cite{KM}) have focalized their attention on the class of closed graphs \cite{HH}.

An algorithm that wants to recognize the closedness of a graph $G$ must first of all consider all its $n!$ labelings. But this is not sufficient. In fact, it has to test all the $S$-pairs of the generators of $J_G$, for each labeling of $G$.

Therefore it is useful to know if there exists a linear-time algorithm for closed graph recognition as we have already underlined in \cite{CR}.  In the research of such algorithms we have observed that there exists a bijection between the class of closed graphs and the class of proper interval graphs.

Proper interval graphs are the intersection graphs of intervals of the real line
where no interval properly contains another and have been extensively studied since their inception (\cite{MCG}, \cite{GH}). There are several representations and many characterizations of them (\cite{FG}, \cite{HMP}, \cite{LO}) and some of them through vertex orderings. Such class of  graphs has many applications, such as physical mapping of DNA and genome reconstruction \cite{WG}, \cite{GGKS}.

During the last decade, many linear-time recognition algorithms for proper interval graphs have been developed (\cite{DC}, \cite{DM}, \cite{HHJ}, \cite{PD}) and most of them are based on special breadth-first search (BFS) strategies.

The first linear-time algorithm
for interval graph recognition appeared in 1976 (\cite{BL}). This algorithm uses a \textit{lexicographic breadth first search} (lexBFS) to find in linear time the maximal cliques of the graphs and then employs special structure called $PQ$-trees to find an ordering of the maximal cliques that characterizes interval graphs.  A lexBFS is a breadth first search procedure with the additional rule that vertices with earlier visited neighbors are preferred and its vantage is that it can be performed in $O(\vert V(G)\vert + \vert E(G)\vert)$ time (\cite{RLT}). 
%
%

In this paper we investigate the relation between the closed graphs and the proper interval graphs.

The paper is organized as follows.

Section 1 contains some preliminaries and notions that we will use in the paper.

In Section 2, we state the main results (Theorem \ref{main1}, \ref{main2}) in the paper: \textit{every closed graph is a proper interval graph}. As a consequence we are able to state that by an ordering on the vertices obtained by a lexBFS research
it is possible to test the closedness of a graph in linear-time.
\section{Preliminaries}\label{sec:pre}
In this section we recall some concepts and a notation on graphs and on simplicial
complexes that we will use in the article.

Let $G$ be a simple graph with vertex set $V(G)$ and the edge set $E(G)$. Let $v, w \in V(G)$. A path $\pi$ from $v$ to $w$ is a sequence of vertices $v=v_0, v_1, \cdots, v_t=w$ such that $\{v_i, v_{i+1}\}$ is an edge of  the underlying graph. A graph $G$ is \textit{connected} if for every pair of vertices $v_1$ and $v_2$ there is a path from $v_1$ to $v_2$. 


When we fix a given labelling on the vertices we say that $G$ is a graph on $[n]$.

Let $G$ be a graph with vertex set $[n]$. A subset $C$ of $[n]$ is called a \textit{clique} of $G$ is for all $i$ and $j$ belonging to $C$ with $i \neq j$ one has $\{i, j\} \in E(G)$.

Set $V = \{x_1, \ldots, x_n\}$. A \textit{simplicial complex}
$\Delta$ on the vertex set $V$ is a collection of subsets of $V$
such that
\begin{enumerate}
\item[(i)] $\{x_i\} \in \Delta$  for all $x_i \in V$ and
\item[(ii)] $F \in \Delta$ and $G\subseteq F$ imply $G \in \Delta$.
\end{enumerate}
An element $F \in \Delta$ is called a \textit{face} of $\Delta$. For
$F \in \Delta$ we define the \textit{dimension} of $F$ by $\dim F
= |F| -1$, where $|F|$ is the cardinality of the set
$F$. A maximal face of $\Delta$  with respect to inclusion is
called a \textit{facet} of $\Delta$.

If $\Delta$ is a simplicial complex  with facets $F_1, \ldots, F_q$, we call $\{F_1, \ldots, F_q\}$ the facet set of $\Delta$ and we denote it by $\FF(\Delta)$. When $\FF(\Delta)=\{F_1, \ldots, F_q\}$, we write $\Delta=\langle F_1, \ldots, F_q\rangle$.

\begin{defn}\label{def:cli}
The \textit{clique complex} $\Delta(G)$ of $G$ is the simplicial complex whose faces are the cliques of $G$.
\end{defn}

The clique complex plays an important role in the study of the class of \textit{closed graphs} (\cite{HH}, \cite{CR}).

\begin{defn} A graph $G$ is closed with respect to the given labeling if the following condition
is satisfied:

For all edges $\{i,j\}$ and $\{k, \ell\}$ with $i<j$ and $k<\ell$ one has $\{j,\ell\}\in E(G)$ if $i=k$, and $\{i,k\}\in E(G)$ if $j=\ell$.
\end{defn}
In particular, $G$ is closed if there exists a labeling for which it is closed.

Since a  graph is closed if and only if each connected component is closed we assume from now on that the graph $G$ is connected.
\begin{thm} Let $G$ be a graph. The following conditions are equivalent:
\begin{enumerate}
 \item[(1)] there exists a labelling $[n]$ of $G$ such that $G$ is closed on $[n]$;
 \item[(2)] $J_G$ has a quadratic Gr\"obner basis with respect to some term order $\prec$ on $S$;
 \item[(3)] there exists a labelling of $G$ such that all facets of $\Delta(G)$ are intervals $[a,b] \subseteq [n]$.
\end{enumerate}
\end{thm}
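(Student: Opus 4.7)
The plan is to split the cyclic equivalence into the Gr\"obner-basis side $(1)\Leftrightarrow(2)$ and the combinatorial side $(1)\Leftrightarrow(3)$. Of these, $(1)\Rightarrow(2)$ is Buchberger's criterion, $(2)\Rightarrow(1)$ is the Gr\"obner-basis theorem of \cite{HH}, $(3)\Rightarrow(1)$ is an immediate consequence of the interval structure of the facets, and $(1)\Rightarrow(3)$ carries the main combinatorial content and rests on a neighborhood lemma that will be the main obstacle.

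For $(1)\Rightarrow(2)$, take the lexicographic term order induced by $x_1>\dots>x_n>y_1>\dots>y_n$, so that the leading monomial of $f_{ij}$ (with $i<j$) is $x_iy_j$. Two generators $f_{ij}$ and $f_{k\ell}$ have overlapping leading monomials only when $i=k$ or $j=\ell$, and a short computation gives $S(f_{ij},f_{i\ell})=-y_if_{j\ell}$ in the case $i=k$, $j<\ell$, and $S(f_{ij},f_{kj})=x_jf_{ik}$ in the case $j=\ell$, $i<k$. Each such $S$-polynomial reduces to zero modulo the generators precisely when the new edge $\{j,\ell\}$ or $\{i,k\}$ lies in $E(G)$, which is exactly the closedness condition, and Buchberger's criterion concludes.

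For $(3)\Rightarrow(1)$, fix a labelling in which every facet of $\Delta(G)$ is an interval. If $\{i,j\},\{i,\ell\}\in E(G)$ with $i<j<\ell$, the edge $\{i,\ell\}$ lies in a facet $F=[a,b]$ with $a\le i$ and $\ell\le b$, so $j\in[i,\ell]\subseteq F$ and the cliqueness of $F$ gives $\{j,\ell\}\in E(G)$; the case of a shared larger endpoint is handled symmetrically.

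For $(1)\Rightarrow(3)$, assume $G$ closed on $[n]$ and let $F\in\FF(\Delta(G))$ with $a=\min F$, $b=\max F$; only $[a,b]\subseteq F$ requires proof. The starting observation is that if $c\in[a,b]$ is adjacent to $a$, then for every $v\in F\setminus\{a\}$ the edges $\{a,v\}$ and $\{a,c\}$ share the smaller endpoint $a$, so closedness gives $\{c,v\}\in E(G)$, making $F\cup\{c\}$ a clique and forcing $c\in F$ by maximality; the problem therefore reduces to the key lemma that, in a closed connected graph without isolated vertices, the right-neighborhood $N^+(i)=\{j>i:\{i,j\}\in E(G)\}$ is either empty or equals $[i+1,b(i)]$ with $b(i)=\max N^+(i)$. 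This lemma is the main obstacle. I would attempt it by contradiction, picking $i<j<m$ with $\{i,m\}\in E(G)$, $\{i,j\}\notin E(G)$ and $j$ minimal, noting first that $\{j,m\}\notin E(G)$ (else closedness on $\{j,m\}$ and $\{i,m\}$ forces $\{i,j\}\in E(G)$), then using the no-isolated-vertex hypothesis to choose a neighbor $u$ of $j$ and handling the case $u\in(i,j)$ by minimality plus two applications of closedness (first $\{i,u\},\{i,m\}$ to get $\{u,m\}\in E(G)$, then $\{u,m\},\{u,j\}$ to get $\{j,m\}\in E(G)$, a contradiction), with a similar chain for $u\in(j,m)$ and a connectedness-driven reduction to these subcases when $u$ lies outside $[i,m]$. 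Once the lemma is established, applying it to $i=a$ with $b(a)\ge b$ gives $\{a,c\}\in E(G)$ for every $c\in[a,b]$, and the starting observation completes $[a,b]\subseteq F$.
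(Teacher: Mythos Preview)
The paper disposes of this theorem entirely by citation: $(1)\Leftrightarrow(2)$ is \cite{CR}, Theorem~3.4, and $(1)\Leftrightarrow(3)$ is \cite{HEH}, Theorem~2.2. Your attempt to unpack these into a self-contained argument is more ambitious, and your treatments of $(1)\Rightarrow(2)$ and $(3)\Rightarrow(1)$ are clean and correct. Two points deserve attention.

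For $(2)\Rightarrow(1)$ you invoke ``the Gr\"obner-basis theorem of \cite{HH}'', but \cite{HH} only gives the equivalence between closedness \emph{for a fixed labeling} and the quadratic Gr\"obner property \emph{for the specific lex order attached to that labeling}. Passing from an \emph{arbitrary} term order with a quadratic Gr\"obner basis back to the existence of a closed labeling is precisely the content of \cite{CR}, Theorem~3.4, and is not in \cite{HH}.

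The substantive gap is in $(1)\Rightarrow(3)$. You correctly isolate the key lemma (that $N^+(i)$ is an interval in a connected closed graph) and your case $u\in(i,j)$ is fine. But the phrase ``a similar chain for $u\in(j,m)$'' does not go through: minimality of $j$ controls $(i,j)$ and says nothing about $(j,m)$, and for $u\in(j,m)$ the edges $\{j,u\}$ and $\{i,m\}$ share no endpoint, so closedness yields nothing directly. One can check that if such a $u$ happens to be adjacent to $i$ or to $m$ then a contradiction follows (via the shared larger endpoint $u$, respectively $m$), but if neither holds then $u$ sits in exactly the same predicament as $j$ and you have made no progress. The ``connectedness-driven reduction'' you allude to is therefore where the real work lies: a genuine proof needs an inductive or shortest-path argument from $j$ to $\{i,m\}$, carefully tracking how closedness propagates along the path until it forces $\{i,j\}$ or $\{j,m\}$. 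As written, this step is a placeholder rather than a proof.
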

\begin{proof} (1) $\Leftrightarrow $ (2): see \cite{CR}, Theorem 3.4.

(1) $\Leftrightarrow $ (3): see \cite{HEH}, Theorem 2.2.
\end{proof}


\section{Closed graphs are proper interval graphs}\label{sec:closed}
In this section we want to underline the relation between the class of the closed graphs and the class of proper interval graphs.

\begin{defn} An graph $G$
is an interval graph if to each vertex $v \in V(G)$ a closed interval $I_v = [\ell_v, r_v]$ of the real line
can be associated, such that two distinct vertices $u, v \in V(G)$ are adjacent if
and only if $I_u \cap I_v \neq \emptyset$.
\end{defn}

The family $\{I_v\}_{v\in V(G)}$ is \textit{an interval representation}
of $G$.

A graph $G$ is a \textit{proper interval graph} if there is an interval
representation of $G$ in which no interval properly contains another. In the
same way, a graph G is a \textit{unit interval graph} if there is an interval
representation of $G$ in which all the intervals have the same length.

If $G$ is a graph, a \textit{vertex ordering} $\sigma$ for $G$ is a permutation of $V(G)$.
We write $u \prec_{\sigma} v$ if $u$ appears before $v$ in $\sigma$. Ordering $\sigma$ is called a \textit{proper
interval ordering} if for every triple $u, v,w$ of vertices of $G$ where $u \prec_{\sigma} v \prec_{\sigma} w$
and $\{u, w\}\in E(G)$, one has $\{u,v\}, \{v,w\} \in E(G)$. We call this condition the \textit{umbrella property}.

The vertex orderings allows to state many characterizations of proper interval graphs. We quote the next result from \cite{LO}, Theorem 2.1.
\begin{thm} \label{vertex} A graph $G$ is a proper interval graph if and only if $G$
has a proper interval ordering.
\end{thm}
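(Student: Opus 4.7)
The plan is to prove both implications directly from the definitions.

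For the forward direction, suppose $G$ has an interval representation $\{I_v = [\ell_v, r_v]\}_{v \in V(G)}$ in which no interval properly contains another. The key observation is that in such a representation, ordering vertices by $\ell_v$ coincides (up to ties) with ordering by $r_v$: if $\ell_u \leq \ell_v$ but $r_u > r_v$, then $I_u$ properly contains $I_v$. I would therefore define $\sigma$ so that $u \prec_\sigma v$ iff $\ell_u < \ell_v$ (breaking ties arbitrarily, with a small argument to show the choice does not matter). To verify the umbrella property, assume $u \prec_\sigma v \prec_\sigma w$ and $\{u,w\}\in E(G)$. Then $I_u \cap I_w \neq \emptyset$ forces $\ell_w \leq r_u$, and combining with $\ell_u \leq \ell_v \leq \ell_w$ and $r_u \leq r_v \leq r_w$ yields $\ell_v \leq \ell_w \leq r_u \leq r_v$, so both $I_u \cap I_v$ and $I_v \cap I_w$ are nonempty.

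For the backward direction, given a proper interval ordering $\sigma = v_1, \ldots, v_n$, I would construct an explicit interval representation. For each $i$, let $r(i)$ be the largest index $j \geq i$ such that $\{v_i, v_j\} \in E(G)$ (and $r(i)=i$ otherwise), and set $I_{v_i} = [i, r(i)]$. Two facts need to be checked: that adjacency is faithfully represented, and that no interval properly contains another. For adjacency, one direction is immediate from the definition of $r(i)$. For the other, if $i < j$ and $I_{v_i} \cap I_{v_j} \neq \emptyset$, then $j \leq r(i)$, so $\{v_i, v_{r(i)}\}\in E(G)$ with $v_i \prec_\sigma v_j \prec_\sigma v_{r(i)}$; the umbrella property then gives $\{v_i, v_j\}\in E(G)$. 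For the non-containment property, if $i<j$ and $r(i) > r(j)$, then $\{v_i, v_{r(i)}\} \in E(G)$ with $v_i \prec_\sigma v_j \prec_\sigma v_{r(i)}$, so the umbrella property yields $\{v_j, v_{r(i)}\}\in E(G)$, contradicting the maximality in the definition of $r(j)$.

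The main obstacle is the backward direction: the construction $I_{v_i} = [i, r(i)]$ must be shown to encode adjacency in both directions, and this step is the only one that genuinely uses the umbrella hypothesis. Everything else is either bookkeeping about endpoint orderings or a direct unpacking of the definitions. Once the two verifications in the backward direction are made, the equivalence follows.
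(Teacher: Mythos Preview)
The paper does not actually prove this theorem; it is quoted from \cite{LO} (Looges--Olariu) without argument, so there is no paper proof to compare against. Your forward direction is fine. In the backward direction, however, there is a genuine gap in the non-containment check.

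You argue that $i<j$ and $r(i)>r(j)$ leads to a contradiction, and that is correct. But proper containment of $I_{v_j}=[j,r(j)]$ inside $I_{v_i}=[i,r(i)]$ occurs whenever $i<j$ and $r(i)\geq r(j)$, and the case $r(i)=r(j)$ is not excluded by your argument---indeed it cannot be, because it actually happens. Take $G$ to be a triangle with the ordering $v_1,v_2,v_3$ (which is a proper interval ordering). Then $r(1)=r(2)=r(3)=3$, so your intervals are $[1,3]\supsetneq[2,3]\supsetneq[3,3]$, all proper containments. Thus the representation $I_{v_i}=[i,r(i)]$ is an interval representation but not a proper one.

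The standard repair is a small perturbation of the right endpoints so that ties in $r(\cdot)$ are broken in the correct direction: set $I_{v_i}=[\,i,\;r(i)+i\varepsilon\,]$ for some $0<\varepsilon<1/n$. Then for $i<j$ one has $r(i)\leq r(j)$ (by your umbrella argument), hence $r(i)+i\varepsilon< r(j)+j\varepsilon$, and both endpoints move strictly to the right as $i$ increases, ruling out containment in either direction. Adjacency is unaffected because $i\varepsilon<1$. (This is exactly the trick the paper itself uses later, in the proof of Theorem~\ref{main1}.) With this fix your argument goes through.
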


The next result shows the relation between a closed graph and a proper interval graph.

\begin{thm} \label{main1} Let $G$ be a closed graph. Then there exists a proper interval graph $H$ such that $G \simeq H$.
\end{thm}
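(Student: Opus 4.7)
The plan is to take the closed labeling of $G$ itself as the candidate vertex ordering and check that it is a proper interval ordering; then invoke Theorem \ref{vertex} to conclude that $G$ (which is isomorphic to the proper interval graph with that representation) is a proper interval graph.

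More precisely, since $G$ is closed, by the equivalence in the theorem of Section \ref{sec:pre}, there is a labeling of $V(G)$ by $[n]$ such that every facet of the clique complex $\Delta(G)$ is an interval $[a,b]\subseteq[n]$. Let $\sigma$ be the ordering $1\prec_\sigma 2\prec_\sigma\cdots\prec_\sigma n$. I will verify the umbrella property for $\sigma$: suppose $u,v,w\in V(G)$ with $u\prec_\sigma v\prec_\sigma w$ and $\{u,w\}\in E(G)$. The edge $\{u,w\}$ is a $1$-dimensional face of $\Delta(G)$, hence it is contained in some facet $F$ of $\Delta(G)$. By the structural property, $F=[a,b]$ for some $a\le b$ in $[n]$, and since $u,w\in F$ we have $a\le u<w\le b$. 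The strict inequalities $u<v<w$ then force $v\in[a,b]=F$, so $\{u,v,w\}\subseteq F$. Because $F$ is a clique, both $\{u,v\}$ and $\{v,w\}$ are edges of $G$, which is exactly the umbrella property.

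Having established that $\sigma$ is a proper interval ordering, Theorem \ref{vertex} yields that $G$ is (isomorphic to) a proper interval graph $H$, proving the theorem.

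The argument is short because the hard work has already been done in the equivalence (1)$\Leftrightarrow$(3) from Section \ref{sec:pre}: the combinatorial content ``facets of $\Delta(G)$ are intervals'' is precisely what bridges closedness with the umbrella condition. The only step that requires care is observing that an edge is always contained in \emph{some} facet of the clique complex and that intervals are closed under ``betweenness,'' both of which are immediate. Hence there is no genuine obstacle beyond unpacking the definitions once the interval-facet characterization of closed graphs is in hand.
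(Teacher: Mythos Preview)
Your argument is correct, but it follows a genuinely different route from the paper's proof of Theorem~\ref{main1}. The paper proceeds constructively: starting from the interval facets $[a_i,b_i]$ of $\Delta(G)$, it builds an explicit interval representation $I_k=[k,\,b(k)+k\varepsilon]$ with $b(k)=\max\{b_i:k\in[a_i,b_i]\}$ and $\varepsilon=1/n$, checks directly that adjacency in $G$ matches intersection of the $I_k$, and then appeals to the $K_{1,3}$-freeness of closed graphs (Roberts' characterization) to upgrade ``interval'' to ``proper interval.'' You instead bypass any construction of intervals and verify the umbrella property for the identity ordering, then invoke Theorem~\ref{vertex}. Your approach is shorter and avoids both the $\varepsilon$-perturbation and the external $K_{1,3}$-free citation; the paper's approach has the advantage of producing an explicit interval model, which may be useful if one actually needs the representation. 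Note that the paper itself records your observation in the final Remark of Section~\ref{sec:closed} (that $G$ is closed on $[n]$ iff $\sigma=\mathrm{id}_{[n]}$ satisfies the umbrella property), though it does not use it as the proof of Theorem~\ref{main1}.
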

\begin{proof} Since $G$ is closed then there exists a labelling $[n]$ of $G$ such that all facets of the clique complex $\Delta(G)$ are intervals $[a,b] \subseteq [n]$, that is
\begin{equation}\label{inter1}
    \Delta(G) =
\langle [a_1,b_1], [a_2,b_2],\ldots,[a_r,b_r]\rangle,
\end{equation}
with $1=a_1<a_2<\ldots<a_r<n$, $1<b_1<b_2<\ldots<b_r=n$  with $a_i < b_i$ and $a_{i+1} \leq b_i$, for $i \in [r]$.

Set $\varepsilon = \displaystyle{\frac 1 n}$. Define the following closed intervals of the real line:
\[I_k = [k, b(k) + k \varepsilon],\]
where
\begin{equation}\label{max}
    \mbox{$b(k)= \max\{b_i\,:\, k \in [a_i,b_i]\},$\qquad  for $k =1, \ldots, n$}.
\end{equation}

Let $H$ be the interval graph on the set $V(H) = \{I_1, \ldots, I_n\}$ and let
\[\varphi: V(G)=[n] \rightarrow V(H)\]
be the map defined as follows:
\[\varphi(k) = I_k.\]
{\bf Claim:} $\varphi$ is an isomorphism of graphs.

Let $\{k, \ell\} \in E(G)$ with $k < \ell$. We will show that $\{\varphi(k),  \varphi(\ell)\} = \{I_k, I_{\ell}\} \in E(H)$, that is, $I_k\cap I_{\ell} \neq \emptyset$.

It is
\[I_k = [k, b(k) + k \varepsilon],\qquad I_{\ell} = [\ell, b(\ell) + \ell \varepsilon].\]

Suppose $I_k\cap I_{\ell} = \emptyset$. Then $b(k) + k \varepsilon < \ell$ and consequently $b(k) < \ell$. It follows that does not exist a clique containing the edge $\{k, \ell\}$. A contradiction.

Suppose that $\{I_k, I_{\ell}\} \in E(H)$, with $k < \ell$. We will prove that $\{k, \ell\} \in E(G)$.

Since $I_k\cap I_{\ell} \neq \emptyset$, then $b(k) + k \varepsilon \geq \ell$. By the meaning of $\varepsilon$ and by the assumption $k < \ell$, it follows that $k\varepsilon < 1$ and so $b(k) \geq \ell$. Hence from (\ref{inter1}) and (\ref{max}),  $\{k, \ell\} \in E(G)$.

Since $G$ is closed and consequently a $K_{1,3}$-free graph \cite{FSR}, the isomorphism $\varphi$ assures that $H$ is a proper interval graph.
\end{proof}
%
The next result shows the relation between a proper interval graph and a closed graph.

\begin{thm} \label{main2}Let $G$ be a proper interval graph. Then there exists a closed graph $H$ such that $G \simeq H$.
\end{thm}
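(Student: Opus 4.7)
The plan is to invoke Theorem \ref{vertex}, which says a proper interval graph $G$ admits a proper interval ordering $\sigma$ of its vertex set, i.e.\ an ordering satisfying the umbrella property: whenever $u\prec_\sigma v\prec_\sigma w$ and $\{u,w\}\in E(G)$, both $\{u,v\}$ and $\{v,w\}$ are in $E(G)$. First I would relabel the vertices of $G$ as $1,2,\ldots,n$ in the order dictated by $\sigma$, and call the resulting labeled graph $H$. Clearly $G\simeq H$ as abstract graphs, so it suffices to show that $H$ is closed with respect to this labeling.

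Next I would verify the combinatorial closedness condition directly from the umbrella property. Consider two edges $\{i,j\},\{k,\ell\}\in E(H)$ with $i<j$ and $k<\ell$. In the case $i=k$, I may assume $j<\ell$ (the equality case is trivial); then $i<j<\ell$ and $\{i,\ell\}\in E(H)$, so the umbrella property applied to this triple yields $\{j,\ell\}\in E(H)$, which is exactly what closedness demands. In the case $j=\ell$, I assume $i<k$; then $i<k<j$ and $\{i,j\}\in E(H)$, so umbrella gives $\{i,k\}\in E(H)$, again as required.

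The two verifications above together establish that $H$ satisfies the defining condition of a closed graph, proving the theorem. The argument is essentially a one-line unpacking: the umbrella condition of proper interval orderings is exactly the closedness condition when one rephrases ``smaller/larger in $\sigma$'' as ``smaller/larger label in $[n]$''. There is really no substantive obstacle here; the only subtle point is to make sure to reduce both closedness cases (shared smaller endpoint $i=k$, and shared larger endpoint $j=\ell$) to an application of umbrella to an appropriate triple, which is handled by the WLOG ordering of the remaining two vertices in each case.
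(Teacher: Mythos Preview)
Your proposal is correct and follows essentially the same approach as the paper: both invoke Theorem~\ref{vertex} to obtain a proper interval ordering, relabel the vertices by $[n]$ accordingly, and then verify the closedness condition case by case ($i=k$ and $j=\ell$) as a direct consequence of the umbrella property. Your write-up is in fact slightly cleaner, since you dispense with the explicit interval representation and work directly with the ordering.
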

\begin{proof} Let $G$ be a proper interval graph and let  $\{I_v\}_{v\in V(G)}$ an \textit{interval representation}
of $G$, with $\vert V(G)\vert = n$.

From Theorem \ref{vertex}, there exists a proper interval ordering $\sigma$ of $G$. Let $\sigma=(I_1,\ldots, I_n)$ of $G$ be such vertex ordering.
It is  $I_j\prec_{\sigma} I_k$ if and only if $j <k$.

Let $H$ be the graph with vertex set $V(H) = [n]$ and edge set $E(H) = \{\{i,j\}\,:\, \{I_i,I_j\}\in E(G)\}$.

{\bf Claim:} $H$ is a closed graph on $[n]$.

Let $\{i,j\}, \{k, \ell\}\in E(H)$ with $i<j$ and $k<\ell$.

Suppose $i=k$. Since $\{i,j\}, \{i, \ell\}\in E(H)$, then $\{I_i,I_j\}, \{I_i,I_{\ell}\}\in E(G)$.

If $i < j < \ell$, then $I_i \prec_{\sigma}I_j \prec_{\sigma} I_k$. Hence since $\sigma$ satisfies the ummbrella property and $\{I_i,I_{\ell}\}\in E(G)$, it follows that $\{I_i,I_j\}, \{I_j,I_{\ell}\}\in E(G)$. Thus $\{j,\ell\}\in E(H)$.

Repeating the same reasoning for $i < \ell < j$, it follows that $\{j,\ell\}\in E(H)$ again.

Similarly for $j=\ell$, one has $\{i, k\}\in E(H)$. Hence $H$ is a closed graph.

It is easy to verify that the proper interval graph $G$  is isomorphic to the closed graph $H$ by the map $\psi: V(G) \rightarrow V(H)=[n]$, that sends every closed interval $I_j\in V(G)$ to the integer $j\in V(H)$.
\end{proof}
%

As a consequence of Theorems \ref{main1}, \ref{main2} we have that:
\begin{cor} \label{equiv} For an undirected graph $G$, the following statements are equivalent:
\begin{enumerate}
\item $G$ is a closed graph;
\item $G$ is a proper interval graph;
\item the clique-vertex incidence matrix of $G$ has the consecutive 1s property both for rows and for columns;
\item $G$ is a unit interval graph;
\item $G$ is a $K_{1,3}$-free interval graph.
\end{enumerate}
\end{cor}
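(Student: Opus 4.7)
The plan is to leverage the fact that Theorems \ref{main1} and \ref{main2} together already give (1) $\Leftrightarrow$ (2), and then to reduce all remaining equivalences to known characterizations of proper interval graphs from the classical graph theory literature. So the structure will be: close the first loop using our main theorems, and then, taking (2) as the pivot, cite one classical result for each of (3), (4), (5).

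More precisely, I would first note that (1) $\Leftrightarrow$ (2) is just the content of Theorems \ref{main1} and \ref{main2}, since both theorems produce isomorphisms that preserve the graph-theoretic structure; being closed and being a proper interval graph are both isomorphism invariants. Next, (2) $\Leftrightarrow$ (4) is Roberts' classical theorem that the class of proper interval graphs coincides with the class of unit interval graphs; this is where I would insert the appropriate bibliographic reference. Similarly, (2) $\Leftrightarrow$ (5) is another theorem of Roberts characterizing proper interval graphs as exactly the $K_{1,3}$-free interval graphs. For (2) $\Leftrightarrow$ (3), the consecutive 1s property on the clique-vertex incidence matrix both row-wise and column-wise is the standard matrix-theoretic characterization of proper interval graphs (the rows condition alone characterizes interval graphs in the Fulkerson--Gross sense, and adding the columns condition forces the "proper" restriction).

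In terms of how I would actually structure the argument in the paper, I would arrange it cyclically or as a hub through (2): prove (1) $\Rightarrow$ (2) by Theorem \ref{main1}, (2) $\Rightarrow$ (1) by Theorem \ref{main2}, and then state (2) $\Rightarrow$ (3), (2) $\Rightarrow$ (4), (2) $\Rightarrow$ (5) with their reverses all as instances of standard results. Since the heavy lifting has been done in the preceding theorems, this corollary is essentially a compilation.

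The main obstacle, such as it is, is not mathematical but bibliographic/expository: I must identify the correct references for each classical characterization and state condition (3) precisely enough that the match with the standard Fulkerson--Gross / Roberts statement is unambiguous (in particular, that the "cliques" meant are the maximal cliques of $G$, so that the incidence matrix is well-defined and the consecutive 1s property in both rows and columns becomes equivalent to the existence of a proper interval ordering given by Theorem \ref{vertex}). Once this pointer is in place, no further computation is required and the corollary follows immediately.
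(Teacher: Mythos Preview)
Your proposal is correct and matches the paper's own proof almost exactly: the paper invokes Theorems \ref{main1} and \ref{main2} for (1) $\Leftrightarrow$ (2) and then cites a single reference (\cite{FG}, Theorem 1) for the chain (2) $\Leftrightarrow$ (3) $\Leftrightarrow$ (4) $\Leftrightarrow$ (5), whereas you would split the latter citations among Roberts and Fulkerson--Gross. That is a purely bibliographic difference, not a mathematical one.
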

\begin{proof} (1)$\Leftrightarrow$ (2): Theorems \ref{main1}, \ref{main2}.

(2)$\Leftrightarrow$ (3) $\Leftrightarrow$ (4) $\Leftrightarrow$ (5): \cite{FG}, Theorem 1.
\end{proof}

\begin{rem} Let $G$ be a graph on the vertex set $[n]$. If we choose $\sigma = id_{[n]}$ as vertex ordering for $G$, then $i \prec_{\sigma} j$ if and only if $i <j$, for every pair $i,j \in [n]$. Then the umbrella property for $\sigma = id_{[n]}$ can be rewritten as follows:

\textit{for every triple $u, v,w$ of vertices of $G$ with $u < v < w$
and $\{u, w\}\in E(G)$, one has $\{u,v\}, \{v,w\} \in E(G)$.}

Hence, from Theorem \ref{vertex} and Corollary \ref{equiv}, it follows that $G$ is a closed graph on the vertex set $[n]$ if and only if $\sigma = id_{[n]}$ satisfies the umbrella property. See also \cite{KM}.
\end{rem}


\bibliographystyle{plain}

\end{document}